\newcommand{\N}{\mathbb{N}}
\newcommand{\R}{\mathbb{R}}
\newcommand{\Z}{\mathbb{Z}}
\newcommand{\rar}{\rightarrow}
\newcommand{\mc}{\mathcal}
\newtheorem*{theorem1*}{Theorem \cite[Theorem 1.1]{Afif2024}}
\newtheorem{theorem}{Theorem}
\newtheorem{lemma}[theorem]{Lemma}
\begin{document}
	
	\title[Matrices with fixed determinant and bounded coefficients]{Counting $2\times 2$ matrices with fixed determinant and bounded coefficients}
	
	\author{Kavita Dhanda, Alan Haynes, Silmi Prasala}

	\thanks{Research supported by a Summer Undergraduate Research Fellowship (SURF) from the University of Houston.\\
		\phantom{A..}MSC 2020: 11D04; 11D45; 11N45}
	
	\keywords{Determinants, matrices with bounded coefficients}

	\begin{abstract}
	Recent work by M. Afifurrahman established the first asymptotic estimates with error terms for the number of $2\times 2$ matrices with fixed non-zero determinant $n\in\mathbb{N}$, and with coefficients bounded in absolute value by $X$. In this paper we present a new proof of this result, which also gives an improved error term as $X\rightarrow\infty$. Similar to Afifurrahman's result, our error term is uniform in both $n$ and $X$, and our estimates are significant for $X$ as small as $n^{1/2+\delta}$. To complement this, we also demonstrate that the exponent $1/2+\delta$ in this statement cannot be reduced, by establishing a result which gives a different asymptotic main term when $n$ is either a prime or the square of a prime, and when $X=n^{1/2}$.
	\end{abstract}
	
	\maketitle

\section{Introduction}\label{sec.Intro}
It is a well studied problem to estimate the number $S_{k,\|\cdot\|}(n,X)$ of $k\times k$ matrices $A$ with integer coefficients, $\det(A)=n$, and $\|A\|\le X$, where $k\ge 2$, $n\in\Z$, $X\ge 1$, and $\|\cdot\|$ is a prescribed norm. For the Euclidean norm $\|\cdot\|_2$, a famous result of Selberg from the 1970's on the hyperbolic circle problem (see p.5 of Document 3 in Selberg's unpublished notes \cite{Selb1970}) gives that
\begin{equation*}
	S_{2,\|\cdot\|_2}(1,X)=6X^2+O_\epsilon\left(X^{4/3+\epsilon}\right),
\end{equation*}
for any $\epsilon>0$.

In 1993 Duke, Rudnick, and Sarnak published a landmark paper on the density of integer points on affine homogeneous varieties \cite{DukeRudnSarn1993}. As one application of their results they proved (Theorem 1.10 in their paper) that, for any $k\ge 2$ and for any $n\not= 0$,
\begin{equation}\label{eqn.DRS0}
	S_{k,\|\cdot\|_2}(n,X)=c_{k,n}X^{k^2-k}+O_{n,\epsilon}\left(X^{k^2-k-1/(k+1)+\epsilon}\right),
\end{equation}
for any $\epsilon>0$, where
\begin{equation*}
c_{k,n}=\frac{\pi^{k^2/2}n^{-(k-1)}}{\Gamma\left(\frac{k^2-k+2}{2}\right)\Gamma\left(\frac{k}{2}\right)\zeta(2)\cdots\zeta(k)}\prod_{p^\alpha\| n}\frac{(p^{\alpha+1}-1)\cdots (p^{\alpha+k-1}-1)}{(p-1)\cdots (p^{k-1}-1)}.
\end{equation*}
Analogous results for the case when $n=0$ were established by Katznelson in \cite{Katz1993} (cf. \cite{Katz1994}). These types of problems were further investigated, using tools from ergodic theory, by Eskin, Mozes, and Shah \cite{EskiMozeShah1996,Shah2000}, who proved asymptotic formulas for numbers of matrices with bounded Euclidean norm and specified characteristic polynomial.

Surprisingly, the asymptotic behavior of $S_{k,\|\cdot\|_\infty}(n,X)$, where $\|\cdot\|_\infty$ is the sup-norm, has been less well studied, even in the case when $k=2$, until recently. The above mentioned work of Selberg, Duke, Rudnick, and Sarnak relies on the rotational invariance of the Euclidean norm, and therefore does not apply to this problem. Asymptotic formulas for $S_{2,\|\cdot\|_\infty}(0,X)$ (with second order asymptotics) can be derived from work of Ayyad, Cochrane, and Zhang \cite{AyyaCochZhan1996}. There is an asymptotic formula for $S_{2,\|\cdot\|_\infty}(1,X)$ due to Roettger \cite{Roet2005} (who also considers an analogous problem over rings of integers of algebraic number fields), as well as more recent work of Bulinski and Shparlinski \cite{BuliShpa2024}, which leads to an improved error term (see equation (1.4) in \cite{Afif2024}). A `smoothed' version of the sup-norm problem for $k=2$ and arbitrary $n$ (actually for $2\times 2$ matrices with a fixed characteristic polynomial) is considered in \cite{Guri2024}, however adding smooth weights in the count does change the problem.

Significant progress on this problem was made by M.~Afifurrahman, who in July 2024 published a preprint containing a proof  of the following result.
\begin{theorem1*} For $n\in\N$,
	\begin{equation}\label{thm.Afif}
	S_{2,\|\cdot\|_\infty}(n,X)=\frac{96\sigma_{-1}(n)}{\pi^2}X^2+O\left(X^{o(1)}\max\left\{X^{5/3},n\right\}\right),
\end{equation}
as $X,n\rar\infty$.
\end{theorem1*}
In the statement of the theorem, $\sigma_s(n)$ denotes the sum of the $s^{\text{th}}$ powers of the positive divisors of $n$. One important feature of this result is that, in contrast to \eqref{eqn.DRS0}, the dependence on $n$ of the error term here is made explicit, so that both $X$ and $n$ may tend to infinity as functions of one another. This is necessary for some applications (cf. \cite[Theorem 4]{Shpa2010}, \cite[Section 2.2]{OstaShpa2025}), and the theorem is significant for $X$ even as small as $n^{1/2+\delta}$. For the case of large $X$, a preprint of Ganguly and Guria \cite{GangGuri2024} from October 2024  improves the error term in this theorem to $\ll_\epsilon n^\theta X^{3/2+\epsilon}$ (where $\theta$ is a non-negative constant), in the range where $X\gg n^3$.

Our first goal in this paper is to give a short proof of the following theorem.
\begin{theorem}\label{thm.Main0}
	Let $\epsilon>0$. Then, for $n\in\N$ and $X\ge 1$ we have that
	\begin{equation*}
		S_{2,\|\cdot\|_\infty}(n,X)=\frac{96\sigma_{-1}(n)}{\pi^2}X^2+O_\epsilon\left(\sigma_0(n)X\log X+nX^\epsilon\right).
	\end{equation*}
\end{theorem}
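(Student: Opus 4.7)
The plan is to express $S_{2,\|\cdot\|_\infty}(n,X)$ as a shifted convolution and to evaluate it by interchanging the order of summation. Grouping the quadruples $(a,b,c,d)$ according to the common value $v = ad$ gives the identity
\[
S_{2,\|\cdot\|_\infty}(n,X) \ =\ \sum_{v \in \Z} r_X(v)\, r_X(v - n),
\]
where $r_X(v) := \#\{(p,q) \in \Z^2 : pq = v,\ |p|,|q| \le X\}$. Elementary bookkeeping gives $r_X(0) = 4X+1$ and $r_X(v) = 2\tau_X(|v|)$ for $v \ne 0$, where $\tau_X(m) := \#\{k \ge 1 : k \mid m,\ m/X \le k \le X\}$ is a truncated divisor function. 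The contributions from $v \in \{0,n\}$ are each $O(\sigma_0(n)X)$, so the key quantity to estimate is
\[
\Sigma \ :=\ \sum_{v \in \Z \setminus \{0,n\}} \tau_X(|v|)\,\tau_X(|v-n|).
\]

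Now expand each $\tau_X$ as a divisor sum and interchange summations. By the Chinese Remainder Theorem, for each pair $(k,\ell)$ with $1 \le k,\ell \le X$ the inner count over $v$ is nonzero iff $d := \gcd(k,\ell) \mid n$, in which case the admissible $v$ form a single arithmetic progression of common difference $k\ell/d$ inside the interval $[\max(-kX,\,n-\ell X),\ \min(kX,\,n+\ell X)]$. After writing $k = dk'$, $\ell = d\ell'$ with $\gcd(k',\ell')=1$, a ``generic'' pair (satisfying $|k-\ell| \ge n/X$) contributes $2X/\max(k',\ell') + O(1)$ to $\Sigma$. Summing over $(k',\ell')$ and $d \mid n$ produces the main contribution
\[
2X \sum_{d \mid n}\ \sum_{\substack{1 \le k',\ell' \le X/d \\ \gcd(k',\ell')=1}} \frac{1}{\max(k',\ell')}\ =\ \frac{24\,\sigma_{-1}(n)}{\pi^2}X^2 \,+\, O(\sigma_0(n)X\log X),
\]
using the standard Euler-type estimate $\sum_{m \le Y}\phi(m)/m = 6Y/\pi^2 + O(\log Y)$ in the inner sum. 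Multiplying by $4$ recovers the claimed main term $96\sigma_{-1}(n)X^2/\pi^2$.

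Two sources of error remain. The ``bad'' pairs with $|k-\ell| < n/X$, for which the range of $v$ is truncated by the shift $n$, can be bounded by parameterizing $j := |k-\ell| < n/X$, applying the telescoping identity $1/(k'(k'+j)) = j^{-1}(1/k' - 1/(k'+j))$, and combining with the divisor bound $\sigma_0(n) \ll_\epsilon n^\epsilon$; this contributes at most $O_\epsilon(nX^\epsilon)$. The principal technical obstacle is the $O(1)$ rounding in the arithmetic progression count for each good pair: summed pointwise over the $\asymp X^2\sigma_{-2}(n)$ active pairs, this would inflate to the useless total $O(X^2)$. Overcoming this requires exploiting cancellation in the fractional-part sums that encode the rounding. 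The intended route is to perform the summation over $\ell$ (say) first, rewriting the inner sum as a truncated $\tau_X$-sum along an arithmetic progression in $v$, and then to control the deviation either by a direct floor-function manipulation that telescopes, or by a uniform discrepancy (Erd\H{o}s--Tur\'an type) bound for the resulting fractional-part sequence, so that the net rounding contribution is absorbed into $O(\sigma_0(n)X\log X + nX^\epsilon)$.
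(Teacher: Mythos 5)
Your reduction to the shifted convolution $\sum_v r_X(v)r_X(v-n)$ is correct, and your computation of the main term (the constant $96/\pi^2$, the stratification by $d=\gcd(k,\ell)\mid n$, and the evaluation via $\sum_{m\le Y}\varphi(m)/m$) checks out. But the proof has a genuine gap exactly where you flag one: the $O(1)$ rounding error in counting points of a progression of modulus $k\ell/d$ in the interval for $v$, summed over the $\asymp X^2$ admissible pairs $(k,\ell)$, is of the same order as the main term, and neither remedy you sketch is carried out or likely to succeed at the required strength. The quantity to be controlled is a sum of fractional parts $\bigl\{(\pm kX-v_0(k,\ell))d/(k\ell)\bigr\}$ in which $v_0$ is determined by $n\bar{k}\bmod \ell$; exhibiting cancellation here is essentially the hard part of the binary additive divisor problem. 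An Erd\H{o}s--Tur\'an approach leads to incomplete Kloosterman sums, and even with Weil's bound one would expect a power saving of the shape $X^{2-\delta}$, far from the claimed $O(\sigma_0(n)X\log X+nX^{\epsilon})$; the alternative ``direct floor-function manipulation that telescopes'' is not specified. As written, your argument establishes the theorem only with an error term $O(X^2)$, which is vacuous.

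The paper circumvents this obstacle with a structurally different, asymmetric decomposition. It writes the count as $2S_{leq}(n,X)-S_{eq}(n,X)$, where $S_{leq}$ imposes $|r|\le|q|$, fixes $q$ (with $(q,n)=d$) and $a$ (with $(a,q)=e$), solves for $b$, and counts $r$ in a single residue class modulo $q/d$ over the interval $|r|\le|q|$. That interval is an exact multiple ($2e$ periods, after rescaling by $d/e$) of the modulus, so the progression count is exactly $2e$ with no rounding error at all; the only errors then come from the coprimality sieve over $a$ (contributing $\sigma_0(n)X\log X$) and from the few $r$ for which $|b|>X$ (contributing $nX^{\epsilon}$ via the divisor bound, which parallels your treatment of the ``bad'' pairs). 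To rescue your approach you would need to build a similar exactness into the expansion, for instance by breaking the symmetry between the two truncated divisor functions; the symmetric expansion of both $\tau_X$'s is precisely what creates the unremovable $O(X^2)$.
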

The bound in our theorem recovers the error term from \eqref{thm.Afif} when $X$ is small compared to $n$, but it gives a significant power savings (over both that result and the one in \cite{GangGuri2024}) when $X\gg n$. As in Afifurraman's result, our error term allows one to obtain significant estimates for $X$ as small as $n^{1/2+\delta}$. It turns out that this is the threshold of what one could hope for in the exponent, as demonstrated by our second result.
\begin{theorem}\label{thm.Main1}
	For all prime numbers $p$ we have that
	\begin{equation*}
		S_{2,\|\cdot\|_\infty}(p,p^{1/2})=4\left(\frac{12}{\pi^2}-1\right)p+O\left(p^{1/2}\log p\right),
	\end{equation*}
	and that
	\begin{equation*}
	S_{2,\|\cdot\|_\infty}(p^2,p)=4\left(\frac{12}{\pi^2}-1\right)p^2+O\left(p\log p\right).
\end{equation*}
\end{theorem}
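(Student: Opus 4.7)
I focus on the first identity; the second is analogous. Write $X = \lfloor p^{1/2} \rfloor$, so $X^2 < p$ since primes are not squares.

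\textbf{Sign reduction.} For a matrix with entries in $[-X, X]$ and determinant $p$, the bounds $|ad|, |bc| \le X^2 < p$ combined with $ad - bc = p$ force $ad \ge p - X^2 > 0$ and $bc \le -(p - X^2) < 0$; in particular every entry is nonzero, since $ad = 0$ would produce $|bc| = p > X^2$. Counting the four sign patterns of $(a, d, b, c)$ with $ad > 0$ and $bc < 0$,
\[
S_{2,\|\cdot\|_\infty}(p, p^{1/2}) = 4\,N(p, X), \qquad N(p, X) := \#\{(a, d, b, c) \in [1, X]^4 : ad + bc = p\}.
\]

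\textbf{Lattice points on lines.} Fix $(a, b) \in [1, X]^2$. Any common factor of $a$ and $b$ divides $p = ad + bc$ and so, being less than $p$, equals $1$. The integer solutions to $ax + by = p$ form a one-dimensional arithmetic progression along the line $\ell_{a, b}$ with spacing $\sqrt{a^2 + b^2}$; an elementary geometric analysis gives
\[
\#\bigl(\ell_{a, b} \cap \Z^2 \cap [1, X]^2\bigr) = \frac{\operatorname{length}\bigl(\ell_{a, b} \cap [1, X]^2\bigr)}{\sqrt{a^2 + b^2}} + O(1),
\]
whose leading term, in the generic case when the line exits through the top and right edges of the box, simplifies to $((a + b)X - p)/(ab)$ (and is $0$ unless $a + b \ge X$).

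\textbf{Main term.} Summing the leading term over coprime $(a, b)$ with $a + b \ge X$ and applying M\"obius inversion for the coprimality constraint reduces the computation, via Riemann-sum approximation, to
\[
X^2 \left(\sum_{d \ge 1} \frac{\mu(d)}{d^2}\right) \int\!\!\int_{\substack{(u, v) \in [0, 1]^2 \\ u + v \ge 1}} \frac{u + v - 1}{uv}\,du\,dv + O(X \log X).
\]
The M\"obius sum equals $6/\pi^2$, and the integral equals $2 - \pi^2/6$ by the split $(u + v - 1)/(uv) = 1/u + 1/v - 1/(uv)$ together with $-\int_0^1 \log(1 - v)/v\,dv = \pi^2/6$. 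Their product is $(12/\pi^2 - 1)\, p$, matching the claimed main term after multiplication by $4$.

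\textbf{Main obstacle, and the second identity.} The crux is controlling the accumulated $O(1)$ lattice-count errors from the $\asymp X^2$ relevant pairs $(a, b)$: the naive bound is $O(X^2) = O(p)$, comparable to the main term itself. The error at $(a, b)$ has the explicit form $\{f_1(a, b; p)\} - \{f_2(a, b; p)\}$, a difference of fractional parts of linear-in-$p$ quantities with denominator $ab$, and the required bound $O(\sqrt p\, \log p)$ should follow from cancellation via an Erd\H{o}s--Tur\'an / Weyl-type discrepancy estimate, exploiting the equidistribution of the residues $p\,a^{-1} \pmod b$ as $a$ ranges over units modulo $b$. The second identity is proved identically with $(n, X) = (p^2, p)$; two extra contributions appear (the boundary configurations with $ad = 0$ or $bc = 0$ give $16p + 4$, and the single coprime failure $(a, b) = (p, p)$ gives $4(p - 1)$), both of which fit within the $O(p \log p)$ error.
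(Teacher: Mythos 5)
Your sign reduction and your main-term computation are both correct: the constraint $|ad|,|bc|\le X^2<p$ does force $ad>0>bc$, and the M\"obius factor $6/\pi^2$ times the integral $2-\pi^2/6$ does yield the constant $12/\pi^2-1$. But the argument has a genuine gap exactly where you flag it, and it is not a technicality: the theorem \emph{is} the assertion that the $\asymp X^2$ individual $O(1)$ errors from the lattice-point counts on the lines $\ell_{a,b}$ cancel down to $O(p^{1/2}\log p)$, and you have not proved this. Moreover, the route you propose is unlikely to deliver the stated bound. An Erd\H{o}s--Tur\'an argument for the equidistribution of $pa^{-1}\bmod b$ rests on incomplete Kloosterman sums, whose Weil-type bounds give a discrepancy of roughly $b^{1/2+\epsilon}$ per modulus $b$; summing over $b\le X$ leaves an error of order about $X^{3/2}=p^{3/4}$, which exceeds $p^{1/2}\log p$. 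So even if carried out in full, this strategy would at best prove a weaker error term, and as written it proves nothing beyond the main-term heuristic.

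The paper avoids equidistribution entirely. It first shows by Dirichlet's pigeonhole (applied to the values $b-rc\bmod n$ for $(b,r)$ in a box of cardinality at least $n+1$) that the map $T(a,q)=aq^{-1}\bmod n$ is \emph{surjective} from the set $\Omega$ of coprime pairs with $|a|\le n^{1/2}$, $0<q<n^{1/2}$ onto $\Z/n\Z$; here the hypothesis $n=p$ or $p^2$ enters to guarantee $q$ is invertible. Since two distinct points of $\Omega$ share an image precisely when $ar-bq=\pm n$, and the size constraints force the fibers to have at most two elements, one gets the \emph{exact} identity $\#\{((a,q),(b,r))\in\Omega\times\Omega: ar-bq=n\}=|\Omega|-n$. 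The only analytic input is then the classical count of coprime lattice points, $|\Omega|=\tfrac{12}{\pi^2}n+O(n^{1/2}\log n)$, from which the error term of the theorem is inherited directly. If you want to salvage your approach, you would need to find comparable exact cancellation among your fractional-part errors rather than estimating them term by term.
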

Note that the constant on the main term in these formulas is different from that in Theorem \ref{thm.Main0}. This shows, in particular, that the main term in Theorem \ref{thm.Main0} does not give a correct asymptotic estimate, as $n\rar\infty,$ with  $X=n^{1/2}$.

Results similar to Theorem \ref{thm.Main1} for higher powers of primes do not seem to follow from the same types of arguments given in our proof. This is a direction for further investigation, which was the initial motivation for the research leading to the results in this paper, and which is closely related to a natural analogue of the `minimal denominator problem' in the setting of $p$-adic numbers (see \cite{ChenHayn2023,Mark2024,Shpa2024}).


\noindent \textbf{Notation:} We use $O$ and $\ll$ to denote the standard big-oh and Vinogradov notations. If $a$ and $q$ are integers then we write $(a,q)$ for the $\mathrm{gcd}$ of $a$ and $q$.
The symbols $\varphi,\mu,\omega,$ and $\zeta$ denote the Euler phi, M\"obius, prime omega, and Riemann zeta functions, respectively. For $n\in\N$, $\sum_{d|n}$ denotes a sum over positive divisors $d$ of $n$. For $x\in\R,$ we write $\lfloor x\rfloor$ for the greatest integer less than or equal to $x$ and $\{x\}=x-\lfloor x\rfloor$ for the fractional part of $x$.

\section{Proof of Theorem \ref{thm.Main0}}

First note that it is sufficient to prove the theorem for $X\in\N$, since interpolating between integers in the main term adds an error of at most $O(\sigma_{-1}(n)X)$. For $n\in\Z$ and $X\in\N$, write $S(n,X)=S_{2,\|\cdot\|_\infty}(n,X)$,
\begin{align*}
	S_{leq}(n,X)&=\left\{(a,b,q,r)\in\Z^4 : |a|,|b|,|q|\le X, |r|\le |q|, ar-bq=n\right\},~\text{and}\\
	S_{eq}(n,X)&=\left\{(a,b,q,r)\in\Z^4 : |a|,|b|,|q|\le X, |r|= |q|, ar-bq=n\right\}.
\end{align*}
Then $S(n,X)=S(-n,X), S_{leq}(n,X)=S_{leq}(-n,X)$, and
\begin{align}
	S(n,X)&=S_{leq}(n,X)+S_{leq}(-n,X)-S_{eq}(n,X)\nonumber\\
	&=2S_{leq}(n,X)-S_{eq}(n,X).\label{eqn.S_dSum-1}
\end{align}
For $n\in\N$ we have that
\begin{align*}
	S_{leq}(n,X)=\sum_{d|n}S_d,
\end{align*}
where
\begin{align*}
	S_d=\sum_{\substack{|q|\le X\\(q,n)=d}}\sum_{\substack{|a|,|b|\le X\\|r|\le |q|\\ar-bq=n}}1.
\end{align*}
For the remainder of the proof we will simplify notation by writing $q_d=q/d, n_d=n/d, X_d=X/d,$ and so on. First of all we have that
\begin{align}\label{eqn.S_dSum0}
	S_d&=\sum_{\substack{|q_d|\le X_d\\(q_d,n_d)=1}}\sum_{\substack{|a|,|b|\le X\\|r|\le d|q_d|\\ar-bdq_d=n}}1.
\end{align}
Note that, in order to have $ar-bq=n$, it is necessary that $(a,q)|n$, which implies that $(a,q)|d$. Therefore there is no loss in restricting the sum over $a$ in the previous equation to divisors of $d$. Furthermore, if $(a,q)=e$, for some $e|d$, then it must be the case that $\frac{d}{e}|r$.  This means that \eqref{eqn.S_dSum0} is equal to
\begin{align*}
\sum_{\substack{|q_d|\le X_d\\(q_d,n_d)=1}}\sum_{e|d}\sum_{\substack{|a|\le X\\(a,q)=e}}\sum_{\substack{|r|\le|q|\\\frac{d}{e}|r}}\sum_{\substack{|b|\le X\\a_er_{d/e}-bq_d=n_d}}1.
\end{align*}
The condition that $(a,q)=e$ guarantees that $(a_e,q_d)=1$. Once $q, a,$ and $r$ are chosen in the sums above, the equation $a_er_{d/e}-bq_d=n_d$ will have a solution $|b|\le X$ (which obviously then must be unique) if and only if $r_{d/e}=n_da_e^{-1}~\mathrm{mod}~q_d$ and
\begin{equation}\label{eqn.S_dSum2}
\left|\frac{a_er_{d/e}-n_d}{q_d}\right|\le X.
\end{equation}
Therefore 
\begin{align*}
	S_d=\sum_{\substack{|q_d|\le X_d\\(q_d,n_d)=1}}\sum_{e|d}\sum_{\substack{|a_e|\le X_e\\(a_e,q_e)=1}}~\sum_{\substack{|r_{d/e}|\le|q_{d/e}|\\r_{d/e}=n_da_e^{-1}~\mathrm{mod}~q_d\\\text{\eqref{eqn.S_dSum2} holds}}}1.
\end{align*}
Now let us consider the effect of removing condition \eqref{eqn.S_dSum2} from the inner sum. First, note that the conditions $|a_e|\le X_e$ and $|r_{d/e}|\le |q_{d/e}|$ already guarantee that 
\begin{equation*}
	\frac{a_er_{d/e}-n_d}{|q_d|}\le X.
\end{equation*}
For the other inequality, suppose that $q_d$ and $e$ are chosen, that $r_{d/e}=n_da_e^{-1}~\mathrm{mod}~q_d$, and that
\begin{equation*}
	\frac{a_er_{d/e}-n_d}{|q_d|}< -X.
\end{equation*}
Then we have that $a_er_{d/e}=n_d~\mathrm{mod}~q_d$ and that
\begin{equation*}
	-|q_d|X\le a_er_{d/e}< -|q_d|X+n_d.
\end{equation*}
Using the assumption that $X\in\N$, we have that the number of choices for the integer $c=a_er_{d/e}$ is equal to $\left\lfloor\frac{n_d}{|q_d|}\right\rfloor$. For each such integer $c$ the number of different values of $a_e$ and $r_{d/e}$ with $c=a_er_{d/e}$ is at most
\[\sigma_0(c)\ll_\epsilon \left(|q_d|X\right)^\epsilon.\]
Therefore
\begin{align*}
	S_d=M_d+E_d,
\end{align*}
with
\begin{align}
	M_d&=\sum_{\substack{|q_d|\le X_d\\(q_d,n_d)=1}}\sum_{e|d}\sum_{\substack{|a_e|\le X_e\\(a_e,q_e)=1}}~\sum_{\substack{|r_{d/e}|\le|q_{d/e}|\\r_{d/e}=n_da_e^{-1}~\mathrm{mod}~q_d}}1\nonumber\\
	&=\sum_{\substack{|q_d|\le X_d\\(q_d,n_d)=1}}\sum_{e|d}2e\left(\frac{2X_e\varphi(q_e)}{|q_e|}+O\left(2^{\omega(q_e)}\right)\right)\nonumber\\
	&=4X\sum_{e|d}\sum_{\substack{|q|\le X\\(q,n)=d}}\frac{\varphi(q/e)}{|q/e|}+O\left(\sum_{e|d}e\sum_{\substack{|q|\le X\\(q,n)=d}}2^{\omega(q/e)}\right)\label{eqn.S_dSum3}
\end{align}
and
\begin{align*}
	E_d&\ll_\epsilon \sum_{\substack{|q_d|\le X_d\\(q_d,n_d)=1}}\sum_{e|d}\frac{n_d}{|q_d|}\left(|q_d|X\right)^\epsilon\ll_\epsilon \frac{\sigma_0(d)}{d^{1+\epsilon}}\left(nX^{2\epsilon}\right).
\end{align*}
Now we consider the sum of these quantities over divisors of $n$. For the main term in \eqref{eqn.S_dSum3} we have that
\begin{align*}
	\sum_{d|n}\left(4X\sum_{e|d}\sum_{\substack{|q|\le X\\(q,n)=d}}\frac{\varphi(q/e)}{|q/e|}\right)&=4X\sum_{e|n}\sum_{\substack{|q|\le X\\e|q}}\frac{\varphi(q/e)}{|q/e|}\\
	&=4X\sum_{e|n}\sum_{|q_e|\le X_e}\frac{\varphi(q_e)}{|q_e|}\\
	&=\frac{48\sigma_{-1}(n)}{\pi^2}X^2+O\left(\sigma_0(n)X\log X\right).
\end{align*}
For the error term in \eqref{eqn.S_dSum3} we have that
\begin{align*}
	\sum_{d|n}\sum_{e|d}e\sum_{\substack{|q|\le X\\(q,n)=d}}2^{\omega(q/e)}=\sum_{e|n}e\sum_{|q_e|\le X_e}2^{\omega(q_e)}\ll \sigma_0(n)X\log X.
\end{align*}
For the sum of the $E_d$ error terms, adjusting the choice of $\epsilon$ above and the corresponding constant, we obtain
\begin{align*}
	\sum_{d|n}E_d\ll_\epsilon nX^\epsilon.
\end{align*}
Finally, it is easy to show that $S_{eq}(n,X)\ll \sigma_0(n)X$. Combining these estimates in formula  \eqref{eqn.S_dSum-1} completes the proof of the theorem.

\section{Proof of Theorem \ref{thm.Main1}}
First consider the following basic lemma.
\begin{lemma}\label{lem.p-adic1}
Let $p$ be prime and suppose that $n=p$ or that $n=p^2$. For every integer $c$ satisfying $0\le c< n$, there exist integers $a$ and $q$ satisfying $|a|\le n^{1/2}$, $0<q< n^{1/2}$, and
\[aq^{-1}=c ~\mathrm{mod}~ n.\]
\end{lemma}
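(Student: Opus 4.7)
The plan is to prove this by a Dirichlet-type pigeonhole argument on the residues $qc\pmod n$, treating the two cases $n=p$ and $n=p^2$ separately so that we can arrange the strict inequality $q<n^{1/2}$.

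\emph{Case $n=p$.} Let $Q=\lfloor p^{1/2}\rfloor$. Since $p$ is prime, $p$ is not a perfect square, so $Q+1>p^{1/2}$ and hence $(Q+1)^2>p$. I would consider the $(Q+1)^2$ pairs $(q_1,a_1)\in\{0,1,\dots,Q\}^2$ and the map sending each pair to the residue $q_1c-a_1\pmod p$, which takes at most $p$ values. By the pigeonhole principle there are two distinct pairs $(q_1,a_1)\neq(q_2,a_2)$ with the same residue; setting $q=q_1-q_2$ and $a=a_1-a_2$, and negating if necessary to arrange $q\geq 0$, I get $qc\equiv a\pmod p$ with $|q|,|a|\le Q<p^{1/2}$. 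The case $q=0$ would force $a\equiv 0\pmod p$ with $|a|<p$, hence $a=0$, contradicting distinctness. So $0<q<p^{1/2}$, and since $q<p$ we have $(q,p)=1$, so $q$ is invertible mod $p$ and $aq^{-1}\equiv c\pmod p$.

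\emph{Case $n=p^2$.} Here $n^{1/2}=p$ exactly, so the symmetric pigeonhole above would only yield $q\le p$, not $q<p$. The remedy is to use an asymmetric box. I would range $q_1$ over $\{0,1,\dots,p-1\}$ and $a_1$ over $\{0,1,\dots,p\}$, giving $p(p+1)=p^2+p>p^2$ pairs. Pigeonhole on the residues $q_1c-a_1\pmod{p^2}$ again produces two distinct pairs that agree; differencing and, if needed, negating, yields $qc\equiv a\pmod{p^2}$ with $0\le q\le p-1$ and $|a|\le p$. The same argument as before rules out $q=0$, so $0<q\le p-1<p=n^{1/2}$ and $|a|\le p=n^{1/2}$. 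Since $0<q<p$ and $p$ is the only prime divisor of $n$, we have $(q,n)=1$, so $q$ is invertible mod $n$ and $aq^{-1}\equiv c\pmod n$.

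The only subtlety is the strict inequality $q<n^{1/2}$ in the $n=p^2$ case, where the naive symmetric Dirichlet box is exactly too large; the asymmetric choice above is designed specifically so that the pigeonhole count $p(p+1)$ just exceeds $n$ while the $q$-coordinate stays strictly below $p$. Otherwise both arguments are standard, and invertibility of $q$ modulo $n$ is automatic from $q<p$.
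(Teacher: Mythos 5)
Your proof is correct and is essentially the same argument as the paper's: a pigeonhole on the residues $q_1c-a_1 \bmod n$ over a box of more than $n$ pairs, followed by differencing and ruling out $q=0$ via $0\le q<p$. The paper simply writes the box uniformly as $0\le a_1\le n^{1/2}$, $0\le q_1<n^{1/2}$, which specializes to exactly your symmetric box when $n=p$ and your asymmetric box when $n=p^2$.
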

\begin{proof}[Proof of lemma]
Let
\[\mc{A}=\left\{(b,r): 0\le b\le n^{1/2}, 0\le r<n^{1/2}\right\}.\]
It is easy to check that, whether $n=p$ or $p^2$,
\[|\mc{A}|\ge n+1.\]
Therefore, by the Dirichlet pigeonhole principle, there are distinct $(a_1,q_1),(a_2,q_2)\in\mc{A}$ with
\[a_1-q_1c=a_2-q_2c~\mathrm{mod}~n.\]
Take $a=a_1-a_2$ and $q=q_1-q_2$ and assume without loss of generality, by switching the signs if necessary, that $q\ge 0$. If $q=0~\mathrm{mod}~p$ then, since $q<n^{1/2}\le p$, it must be the case that $q=0$. Since $|a|\le n^{1/2}$, this forces $a=0$ as well, which is a contradiction. Therefore $q>0$, and it is invertible modulo $n$, which completes the proof of the lemma.
\end{proof}

Let us continue to write $n=p$ or $p^2$, depending on which of the two cases in the statement of Theorem \ref{thm.Main1} is being considered. Define $\Omega\in\Z^2$ by
\[\Omega=\left\{(a,q)\in\Z^2: |a|\le n^{1/2}, 0<q<n^{1/2}, (a,q)=1\right\},\]
and define $T:\Omega\rar(\Z/n\Z)$ by
\[T(a,q)=aq^{-1}~\mathrm{mod}~n.\]
It is easy to see from Lemma \ref{lem.p-adic1} that the map $T$ is surjective. Distinct points $(a,q),(b,r)\in\Omega$ will have the same image under $T$ if and only if $ar-bq=\pm n$. It follows from this that every point in $\Z/n\Z$ is the image of either one or two points in the domain, and that
\begin{align*}
	n=|\Omega|-\#\{((a,q),(b,r))\in\Omega\times\Omega : ar-bq=n\}.
\end{align*}
Finally, by considering the maps $(a,q)\leftrightarrow (-a,-q)$ and $(b,r)\leftrightarrow (-b,-r)$ (which require also mapping $n$ to $-n$, but do not change the quantities we are counting), and by subtracting off contributions along the boundaries of the regions, we have that
\begin{align*}
S_{2,\|\cdot\|_\infty}(n,\sqrt{n})&=4\#\{((a,q),(b,r))\in\Omega\times\Omega : ar-bq=n\}+O(n^{1/2})\\
&=4\left(|\Omega|-n\right)+O(n^{1/2})\\
&=4\left(\frac{12}{\pi^2}-1\right)n+O(n^{1/2}\log n).
\end{align*}
This completes the proof of the theorem.

\vspace{.15in}
		
{\footnotesize
\noindent
Department of Mathematics\\
University of Houston\\
Houston, TX, United States\\
kkavita@cougarnet.uh.edu\\
haynes@math.uh.edu\\
szprasal@cougarnet.uh.edu
			
}

	\end{document}